\documentclass{amsproc}

\usepackage{amsmath, tikz-cd}
\usepackage{multicol}
\usepackage{subfigure}
\usepackage{amssymb}
\usepackage{amsfonts}
\usepackage{enumitem}
\usepackage{amsthm}
\usepackage{xcolor}
\usepackage{mathtools}

\theoremstyle{definition}

\newtheorem*{Corollary*}{Corollary}
\newtheorem{Theorem}[equation]{Theorem}
\newtheorem*{Theorem*}{Theorem}
\newtheorem{Lemma}[equation]{Lemma}
\theoremstyle{definition}
\newtheorem{Definition}[equation]{Definition}

\theoremstyle{remark}

\numberwithin{equation}{section}

\begin{document}

\title{Ces\`aro-Type Operators Acting on the Drury-Arveson Space}

\author{Michael R. Pilla}
\address{Florida Polytechnic University, Lakeland, FL 33805}
\curraddr{Florida Polytechnic University, Lakeland, FL 33805}
\email{mpilla@floridapoly.edu}

\subjclass{47A13, 47A05, 47B32}
\date{April 30, 2026.}

\keywords{Ces\`aro Operator, Drury-Arveson Space}

\begin{abstract}
The celebrated Ces\`aro  operator is a well-known operator with interesting connections to a variety of objects in operator theory. Generalizations have been made for Cesàro-type operators acting on weighted Hardy spaces but constructing analogs of the Cesàro operator for function spaces of several complex variables such as the Drury-Arveson space has yet to be achieved. In this article, we posit a definition we belief is the correct generalization to several variables and establish a few of its basic properties.
\end{abstract}

\maketitle

\section{Preliminaries}

\subsection{A Gap in the Literature}

The Ces\`aro operator is a well-known operator with deep connections to numerous objects of interest in operator theory. Generalizations of this classic operator have enjoyed a flurry of activity over the years (\cite{Rhaly}, \cite{Dellepiane}, etc.). These generalizations, however, primarily focus on weighted Hardy spaces. These are Hilbert spaces of analytic functions on the unit disk whose monomials $1, z, z^2,...$ form a complete orthogonal set of non-zero vectors. However, appropriate generalizations of the Ces\`aro operator to the canonical Hilbert function spaces in several complex variables has yet to be addressed. It is the present aim to fill this gap. The focus will be on the Drury-Arveson space as this space has important universal properties akin to those of the Hardy space in one variable. However, the interested reader may expand the results to other Hilbert function spaces of several complex variables. We seek an approach that captures the qualities possessed by the Ces\`aro operator acting on the Hardy space such that it's status as an appropriate generalization is justified.

\subsection{The Drury-Arveson Space and Friends}

We begin with a review of the celebrated Drury-Arveson space on the unit ball $H^2_d(\mathbb{B}_d)$. We assume knowledge of the Hardy space on the unit disk, denoted by $H^2(\mathbb{D})$ or just $H^2$ when clear from context. We first give a quick review of notation in several variables. Recall the following.

For $z=(z_1,...,z_d) \in \mathbb{C}^d$, we let 

\begin{equation}z^{\alpha}=\prod_{i=1}^d z_i^{\alpha_i}\end{equation}

\noindent for every multi-index $\alpha=(\alpha_1,..., \alpha_d) \in \mathbb{N}^d$. Likewise we write

\begin{equation}
\alpha !=\prod_{i=1}^d \alpha_i !
 \quad\text{and}\quad
|\alpha|=\sum_{i=1}^d \alpha_i.
\end{equation}

In one variable, the monomials $1, z, z^2,...$ form a natural ordering. In several variables, the monomials do not come equipped with such a natural ordering. A standard ordering is established using the multi-indices of the monomials $z^{\alpha}$ by simply endowing them with lexographical order. To better observe the pattern, we explicitly write out our ordering for $\mathbb{C}^2$, we obtain 

\begin{equation}
\{1, z_1, z_2, z_1^2, z_1z_2, z_2^2, z_1^3, z_1^2z_2, z_1z_2^2, z_2^3,...\}.
\end{equation}

This will be our assumed ordering.

We recall that a Hilbert function space, also known as a reproducing kernel Hilbert space (RKHS) is a Hilbert space of complex-valued functions with pointwise vector operations along with the property that for each $z$ in our set, the linear functional given by evaluation at $z$, $f \rightarrow f(z)$ is continuous. By the Riesz representation theorem, there exists a unique function $k_z$ called the kernel function in the Hilbert space that induces the linear functional $f(z)=\langle f, k_z \rangle$. In such a case, we call the function $k_z$ the reproducing kernel. 

Let $\mathbb{B}_d=\{z \in \mathbb{C}^d \mid |z|<1 \}$ be the unit ball. We define the Drury-Arveson space $H^2_d(\mathbb{B}_d)$ to be the Hilbert function space with kernel

\begin{equation}k(z,w)=k_w(z)=\frac{1}{1-\langle z, w \rangle}\end{equation}

\noindent where $\langle \cdot, \cdot \rangle$ is the standard inner product. For two functions $f,g \in H^2_d$ if we have Taylor expansions given by

\begin{equation}
f(z)=\sum_{\alpha} c_{\alpha}z^{\alpha}
\quad \text{and} \quad
g(z)=\sum_{\alpha} d_{\alpha}z^{\alpha}
\end{equation}

\noindent then we define their inner product to be

\begin{equation}\langle f, g \rangle_{H^2_d}=\langle f, g \rangle=\sum_{\alpha} \frac{\alpha!}{|\alpha|!}c_{\alpha} \overline{d_{\alpha}}.\end{equation}

An orthonormal basis is then given by $\{e_{\alpha}\}$ where

\begin{equation}e_{\alpha}=\sqrt{\frac{|\alpha|!}{\alpha!}}z^{\alpha}.\end{equation}

Notably, the set of monomials $\{z^{\alpha}\}$ form an orthogonal basis for $H^2_d(\mathbb{B}_d)$.

An analytic function $f(z)=\sum_{\alpha}c_{\alpha}z^{\alpha}$ is in $H^2_d$  if

\begin{equation}||f||^2_{\mathbb{H}^2_d}=||f||^2=\sum_{\alpha} |c_{\alpha}|^2\frac{\alpha !}{|\alpha|!}< \infty.\end{equation}

Note that for $w \in \mathbb{B}_d$, we have

\begin{equation}k(z,w)=k_w(z)=\frac{1}{1-\langle z, w \rangle}=\sum_{n=0}^{\infty} \langle z,w \rangle^n=\sum_{n=0}^{\infty} \sum_{|\alpha|=n}\frac{|\alpha|!}{\alpha !}\overline{w}^{\alpha}z^{\alpha}\end{equation}

\noindent which is in $H^2_d$ and

\begin{equation}f(w)=\sum_{\alpha}c_{\alpha}w^{\alpha}=\sum_{\alpha}c_{\alpha}\frac{|\alpha|!}{\alpha !}w^{\alpha}\langle z^{\alpha}, z^{\alpha} \rangle =\langle f, k_w \rangle.\end{equation}

Recall, we say a function is analytic in the unit ball if it can be written as an absolutely convergent power series $f(z)=\sum_{\alpha}a_{\alpha}z^{\alpha}$. It is often insightful to write this as a decomposition of homogeneous functions $f_n(z)=\sum_{|\alpha|=n} a_{\alpha}z^{\alpha}$ which then gives us $f(z)=\sum_{n=0}^{\infty} f_n(z)$. It immediately follows that $\langle f_n, g_m \rangle=0$ for $m \neq n$.

This space is the unique Hilbert function space satisfying the following three condtions, giving it merit as the appropriate analogue of $H^2$ in several variables:

\begin{itemize}
\item The polynomials are dense.
\item Unitary invariance.
\item Recovery of the Hardy space $H^2$ when restricting to functions of a single variable.
\end{itemize}

The Drury-Arveson space is one space on a scale of Hilbert function spaces known as the standard weighted Bergman spaces (or Hardy-Sobolev spaces) given by the reproducing kernels

\begin{equation}
k^{\alpha}(z,w)=\frac{1}{(1-\langle z, w \rangle)^{\alpha}}.
\end{equation}

For $d$-variables, we have $d=1$ the Szego kernel corresponding to the Drury-Arveson space. We have $\alpha=d$ corresponding to the generalized Hardy space and $\alpha=d+1$ corresponding to the generalized Bergman space. Let $\mathcal{O}(\mathbb{B}_d)$ denote the set of function of $d$ complex variables analytic on the unit ball $\mathbb{B}_d$. We may write the definitions in terms of power series as follows:

\begin{equation}
\mathcal{H}_a=\{f(z)=\sum_{\alpha}a_{\alpha}z^{\alpha} \in \mathcal{O}(\mathbb{B}_d) \mid \sum_{\alpha}|a_{\alpha}|^2\frac{\alpha!}{|\alpha|!}(|\alpha|+1)^{1-\alpha}<\infty\}.
\end{equation}

Our primary focus will be the case $\alpha=1$. For further reading on the Drury-Arveson space, the reader is referred to survey articles by M. Hartz \cite{Hartz} and O. Shalit \cite{Shalit} or the original articles by Drury and Arveson \cite{Drury}, \cite{Arveson}. For further reading on Hardy-Sobolev spaces, see \cite{Zhao}. 

\subsection{The Classical Ces\`aro Operator}

We next briefly recall the classical definition of the Ces\`aro operator acting on $H^2$ in search of motivation for our definition of a multivariable analogue. In such a pursuit, we first recall how the Ces\`aro Operator $C$ acts on the Hardy space. Recall that the monomials given by $\{z^n\}_{n=0}^{\infty}$ form an orthonormal basis for $H^2$. Thus, to understand $C$ in the context of the Hardy space, it is sufficient to determine the action of $C$ on this basis. We have the following:

\begin{align}
C1&=1+\frac{z}{2}+\frac{z^2}{3}+\cdots\\
Cz&=\frac{z}{2}+\frac{z^2}{3}+\frac{z^3}{4}\cdots\\
Cz^2&=\frac{z^2}{3}+\frac{z^3}{4}+\frac{z^4}{5}+\cdots
\end{align}

and in general we have $Cz^n=\sum_{k=n}^{\infty}\frac{1}{k+1}z^k$.

Basic operator theoretic properties of the Ces\`aro Operator have been well studied. A few highlights include the following:

\begin{itemize}
\item The inverse of $C$ is given by $(C^{-1}f)(z)=(1-z)\frac{d}{dz}(zf(z))$.
\item $C$ is bounded on $H^2(\mathbb{D})$ with $||C||=2$ \cite{Halmos}.
\item The spectrum is given by $\sigma(C)=\{z \mid |z-1| \leq 1 \}$ \cite {Halmos}.
\end{itemize}

The Ces\`aro Operator contines to enjoy a flurry of activity (\cite{Agler}, \cite{Gallardo}, \cite{Kizgut}, \cite{Lacruz},\cite{Persson}, \cite{Sin}) due to its deep connections to other well-known operators as well as being an interesting concrete example to study important operator theoretic features such as commutants and invariance subspaces.

When considering the Ces\`aro Operator acting on Hilbert function spaces in one variable such as the Hardy space, it is often insightful to utilize it's (equivalent) integral form given by

\begin{equation}
(Cf)(z)=\int_0^1 f(tz)\frac{1}{1-tz}dt.
\end{equation}

Such an expression affords one an alternative path to investigate operator theoretic properties such as its spectrum, boundedness, and so forth. A wealth of results are known for the Ces\`aro Operator. The reader is encouraged to see \cite{Ross} or \cite{Ross2} for excellent expositions on the subject.

\section{Introducing the Ces\`aro Tuple }

Perhaps the first obstruction in generalizing the notion of a Ces\`aro operator to function spaces of several variables that arises is the fact that there is not a natural map from $n$-degree homogeneous polynomials to $(n+1)$-degree homogeneous polynomials. One way around this that has found immense success is to introduce a {\it tuple} of operators. For example, the celebrated shift operator $M_z$ acting on the Hardy space generalizes to function spaces in $d$-variables via a tuple of shift operators $[M_{z_1}, M_{z_2},...,M_{z_d}]$ where $M_{z_j}$ is the shift operator in the $j$th coordinate with $1 \leq j \leq d$. With this in mind, we posit the following definition:

\begin{Definition}
Let $f=\sum_{n=0}^{\infty}f_n$ be the homogeneous expansion of $f \in H^2_d$. For $\{j\}_{1}^d$, we define a $d$-tuple of operators given by $[C_1,...,C_d]$ where, for $1 \leq j \leq d$,

\begin{equation}
C_jf=f_0+\frac{1}{2}(z_j f_0+f_1)+ \frac{1}{3} (z_j^2 f_0 + z_j f_1 + f_2) + \ldots 
\end{equation}

\end{Definition}

More formally, since the monomials form an orthogonal basis for $H^2_d$ and the each $C_j$ is a linear operator, we may define this operator via its action on the monomials $z^{\alpha}$. 

\begin{Definition}
We define our $d$-tuple of operators given by $[C_1,...,C_d]$ for $1 \leq j \leq d$ via their action on monomials by

\begin{equation}
C_jz^{\alpha}=\sum_{k=0}^{\infty}\frac{1}{|\alpha|+k+1}z^{\alpha+ke_j}.\label{def}
\end{equation}

\end{Definition}

As we shall demonstrate, $C_j$ is bounded on $H^2_d$ and thus these definitions are equivalent. Additionally, this definition admits many analogous properties as its one variable counterpart. For brevity, we shall refer to this tuple as the Ces\`aro tuple.

\section{Properties of our Ces\`aro Tuple}

\subsection{Boundedness of Each Operator in the Ces\`aro Tuple on $H^2_d(\mathbb{B}_d)$}

When discovering a new operator, a reasonable first question to ask is whether it is bounded. In our case, we affirm this is so. We began with a quick lemma.

\begin{Lemma}
Let $w_{\beta}=\frac{\beta!}{|\beta|!}$ and $1 \leq j \leq d$. Then $\frac{w_{\beta}}{w_{\beta-ke_j}} \leq 1$ for any nonnegative integer$k$ and  $\sum_{k=0}^{\beta_j} \frac{w_{\beta}}{w_{\beta-ke_j}}  \leq |\beta|+1$.
\end{Lemma}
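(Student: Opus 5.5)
Write the ratio explicitly. For $0 \le k \le \beta_j$ (so that $\beta-ke_j$ is a genuine multi-index), only the $j$th factorial and the total degree change:
\[
\frac{w_{\beta}}{w_{\beta-ke_j}}=\frac{\beta_j!}{(\beta_j-k)!}\cdot\frac{(|\beta|-k)!}{|\beta|!}
=\prod_{i=0}^{k-1}\frac{\beta_j-i}{|\beta|-i}.
\]
Since $\beta_j\le|\beta|$, each factor satisfies $0\le\frac{\beta_j-i}{|\beta|-i}\le 1$. Hence the ratio is at most $1$, and the empty product at $k=0$ equals $1$. This gives the first claim for every $k$ for which $\beta-ke_j$ is defined, which is the only range used later.

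For the sum, the crude bound from the first part already suffices. Every term is at most $1$, and there are $\beta_j+1$ terms, so
\[
\sum_{k=0}^{\beta_j}\frac{w_{\beta}}{w_{\beta-ke_j}}\le \beta_j+1\le|\beta|+1.
\]

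The main point of care is the indexing. I will interpret the ratio only for $k\le\beta_j$, and I will make sure the telescoping product is written correctly with $k$ factors. A sharper bound, such as the identity $\sum_{k}\binom{|\beta|-k}{\beta_j-k}/\binom{|\beta|}{\beta_j}=\frac{|\beta|+1}{|\beta|-\beta_j+1}$ obtained from the hockey-stick identity, is not needed here.
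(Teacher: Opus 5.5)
Your proposal is correct and matches the paper's proof: you write the ratio as $\prod_{i=0}^{k-1}\frac{\beta_j-i}{|\beta|-i}$, bound each factor by $1$ using $\beta_j\le|\beta|$, and then bound the sum by $\beta_j+1\le|\beta|+1$. Restricting explicitly to $0\le k\le\beta_j$ is the right reading of the statement's ``any nonnegative integer $k$'', since $\beta-ke_j$ is only defined in that range, which is also the only range the paper's argument uses.
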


\begin{proof}
By definition we have

\begin{align}
\frac{w_{\beta}}{w_{\beta-ke_j}}&=\frac{\beta}{|\beta|}\frac{(|\beta|-k)!}{(\beta-ke_j)!}=\frac{\beta_j!}{(\beta_j-k)!}\frac{(|\beta|-k)!}{|\beta|!}\\
&=\frac{\beta_j(\beta_j-1)\cdots(\beta_j-k+1)}{|\beta|(\beta|-1)\cdots (|\beta|-k+1)} \leq 1
\end{align}

since $\beta_j \leq |\beta|$.

This implies that $\sum_{k=0}^{\beta_j} \frac{w_{\beta}}{w_{\beta-ke_j}} \leq \beta_j+1 \leq |\beta|+1$. 

\end{proof}

\begin{Theorem}
For $1 \leq j \leq d$,  $C_j$ is bounded on $H^2_d(\mathbb{B}_d)$.
\end{Theorem}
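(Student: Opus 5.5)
The plan is to compare $C_j$ in orthonormal coordinates with a direct sum of copies of the classical Ces\`aro matrix, using the first part of the preceding Lemma to control the weights.

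\emph{Coordinates.} Write $f=\sum_\alpha c_\alpha z^\alpha$ and $w_\alpha=\frac{\alpha!}{|\alpha|!}$, and set $a_\alpha=\sqrt{w_\alpha}\,c_\alpha$, so that $\|f\|^2=\sum_\alpha|a_\alpha|^2$. By \eqref{def}, the coefficient of $z^\beta$ in $C_jf$ is
\[
\frac{1}{|\beta|+1}\sum_{k=0}^{\beta_j} c_{\beta-ke_j}.
\]
Its orthonormal coordinate is therefore
\[
b_\beta=\sum_{k=0}^{\beta_j}\frac{1}{|\beta|+1}\sqrt{\frac{w_\beta}{w_{\beta-ke_j}}}\;a_{\beta-ke_j}.
\]
So $C_j$ is represented by a matrix $T$ whose only nonzero entries sit at positions $(\beta,\beta-ke_j)$, with value $\frac{1}{|\beta|+1}\sqrt{w_\beta/w_{\beta-ke_j}}$.

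\emph{Domination.} By the Lemma, $w_\beta/w_{\beta-ke_j}\le 1$, so $|T_{\beta,\alpha}|\le M_{\beta,\alpha}$. Here $M$ is the nonnegative matrix with $M_{\beta,\beta-ke_j}=\frac{1}{|\beta|+1}$ for $0\le k\le\beta_j$ and zeros elsewhere. Hence $\|b\|\le\|M|a|\|$, where $|a|$ is the sequence $(|a_\alpha|)$, and it suffices to bound $M$ on $\ell^2$. I expect the tempting route through the second part of the Lemma (a weighted Cauchy--Schwarz followed by swapping sums) to fail, because it leaves $\sum_{k}\frac{1}{|\alpha|+k+1}$, which diverges. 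This is the main obstacle, and the fix is to keep the one-variable Hardy-type structure intact rather than estimate row by row.

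\emph{Reduction to one variable.} The matrix $M$ only links indices differing by a multiple of $e_j$. So it decomposes as a direct sum over the ``lines'' $L_\gamma=\{\gamma+ne_j:n\ge 0\}$, where $\gamma$ ranges over multi-indices with $\gamma_j=0$. On $L_\gamma$, index by $n$. The block has entries $M_{m,n}=\frac{1}{|\gamma|+m+1}\le\frac{1}{m+1}$ for $n\le m$, and zero otherwise. It is thus entrywise dominated by the classical Ces\`aro matrix $\left(\frac{1}{m+1}\mathbf 1_{n\le m}\right)$. That matrix has norm $2$ on $\ell^2$, by Hardy's inequality or by the Schur test with test vector $h(n)=(n+1)^{-1/2}$, and this is consistent with $\|C\|=2$ on $H^2$ as recalled above.

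\emph{Conclusion.} Since each block has norm at most $2$ and the lines partition the index set, $\|M\|\le 2$. Therefore $\|C_jf\|\le 2\|f\|$ for every polynomial $f$. Finally, extend to all of $H^2_d$ by density of the polynomials; the coefficientwise formula above shows that the extension agrees with both definitions of $C_j$.
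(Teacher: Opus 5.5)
Your proposal is correct and is essentially the paper's argument. The paper also fixes the coordinates $\beta'$ off the $j$th direction, bounds $\frac{1}{|\beta|+1}\le\frac{1}{\beta_j+1}$, and applies Hardy's inequality with the nonincreasing weights from the first part of the Lemma to get $\|C_j\|\le 2$; your switch to orthonormal coordinates just makes explicit why that weighted inequality reduces to the unweighted one. (One small correction: the Schur test with $h(n)=(n+1)^{-1/2}$ on both sides gives only about $\sqrt{2\zeta(3/2)}\approx 2.29$, since the column sum at $n=0$ is $\zeta(3/2)$. That still proves boundedness, but the sharp constant $2$ should be credited to Hardy's inequality.)
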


\begin{proof}

We start with polynomials, which are dense in $H^2_d$, and use the Bounded Linear Transformation theorem to extend our result to the entire space $H^2_d$. 

First we define $C_j$ as in \eqref{def} on the space of polynomials. For a polynomial $f$ with $f(z)=\sum_{\alpha}a_{\alpha} z^{\alpha}$ where $|\alpha| \leq N$ for some nonnegative integer $N$, we have $||f||^2=\sum_{\alpha} |a_{\alpha}|^2 \frac{\alpha!}{|\alpha|!}$. Then

\begin{equation}
C_jf=\sum_{\alpha}a_{\alpha}\sum_{k=0}^{\infty}\frac{1}{|\alpha|+k+1}z^{\alpha+ke_j}.
\end{equation}

Next we note that $\beta=\alpha+ke_j$ implies $\alpha=\beta-ke_j$. For fixed $\beta$, we have that $0 \leq k \leq \beta_j$. Note that we may rearrange the terms in the second summation by the properties of $H^2_d$. Specifically, if we view our second summation as a function of only $z_j$, $C_j$ is shifting along a slice in the $z_j^{\text{th}}$-direction which converges in the $H^2$ norm. Hence, rearranging terms, we may write 

\begin{equation}
C_jf=\sum_{\beta}\frac{1}{|\beta|+1}\left(\sum_{k=0}^{\beta_j}a_{\beta-ke_j}\right) z^{\beta}
\end{equation}

which implies that

\begin{equation}
||C_jf||^2=\sum_{\beta}\left|\sum_{k=0}^{\beta_j}\frac{a_{\beta-ke_j}}{|\beta|+1} \right|^2\frac{\beta!}{|\beta|!}.
\end{equation}

Letting $b_{\beta}=\sum_{k=0}^{\beta_j}\frac{a_{\beta-ke_j}}{|\beta|+1}$, we want to show that 

\begin{equation}
\sum_{\beta}|b_{\beta}|^2\frac{\beta!}{|\beta|!} \leq M \sum_{\alpha}|a_{\alpha}|^2\frac{\alpha!}{|\alpha|!}
\end{equation}

for some constant $M>0$. We aim to single out our weighted shift in the $j^{\text{th}}$ direction. Let $\beta=(\beta_j, \beta')$ where $\beta'$ denotes the remaining coordinates. We then have $b_{\beta}=b_{(\beta_j, \beta')}=\frac{1}{\beta_j+|\beta'|+1}\sum_{k=0}^{\beta_j}a_{(\beta_j-k,\beta')}$.

Fixing $\beta'$, we observe that

\begin{equation}
|b_{(\beta_j, \beta')}|\leq \frac{1}{\beta_j+1}\sum_{k=0}^{\beta_j}|a_{(k, \beta')}|
\end{equation}

which implies for fixed $\beta'$ that 

\begin{equation}
\sum_{\beta_j=0}^{\infty} |b_{(\beta_j, \beta')}|^2 \frac{\beta_j!\beta'!}{(\beta_j+|\beta'|)!}\leq \sum_{\beta_j=0}^{\infty} \left(\frac{1}{\beta_j+1}\sum_{k=0}^{\beta_j}|a_{(k, \beta')}|\right)^2 \frac{\beta_j!\beta'!}{(\beta_j+|\beta'|)!}.
\end{equation}

Next we recall the weighted Hardy inequality which states for a nonnegative sequence $\{x_n\}$ and a nonincreasing sequence of weights $\{w_n\}$ that

\begin{equation}
\sum_0^{\infty}\left(\frac{1}{n+1}\sum_{k=0}^n x_k \right)^2w_n \leq 4 \sum_{n=0}^{\infty} x_n^2 w_n.
\end{equation}

Note that by our lemma, our weights are nonincreasing and we may apply Hardy's weighted inequality. Applying this to the above expression and summing over all possible $\beta'$'s, we obtain

\begin{equation}
||C_jf||^2=\sum_{\beta'}\sum_{\beta_j=0}^{\infty}|b_{(\beta_j, \beta')}|^2\frac{\beta_j!\beta'!}{(\beta_j+|\beta'|)!} \leq 4 \sum_{\beta'}\sum_{\beta_j=0}^{\infty}|a_{(\beta_j, \beta')}|^2\frac{\beta_j!\beta'!}{(\beta_j+|\beta'|)!}.
\end{equation}

which implies

\begin{equation}
||C_jf||^2 \leq 4||f||^2.
\end{equation}

as desired. Finally, since our constant $M=4$ is not dependent on the degree of the polynomial and $C_j$ is bounded on the set of polynomials which are dense in $H^2_d$, we may apply the Bounded Linear Transformation theorem to conclude our result.

\end{proof}

\subsection{An Integral Approach for the Ces\`aro Tuple Acting on Spaces of Analytic Functions}

Let $\mathcal{O}(\mathbb{B}_d)$ denote the vector space of analytic functions on $\mathbb{B}_d$. We next produce an integral form for $C_j$ that parallels the form in one variable, further justifying our definition as the appropriate generalization of the Ces\`aro operator. 

\begin{Theorem}
 Given $f \in \mathcal{O}(\mathbb{B}_d)$, the integral form for $C_j$ is given by 

\begin{equation}
C_j f(z)=\int_0^1 f(tz)\frac{1}{1-tz_j}dt
\end{equation}

\end{Theorem}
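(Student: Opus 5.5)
We verify the formula on monomials first, then pass to a general $f \in \mathcal{O}(\mathbb{B}_d)$ by expanding in power series and interchanging sum and integral. Throughout we interpret the statement as saying that the operator defined coefficientwise by \eqref{def} extends to $\mathcal{O}(\mathbb{B}_d)$ and agrees there with the integral expression.

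\textbf{Step 1: monomials.} Fix $z \in \mathbb{B}_d$ and a multi-index $\alpha$. Since $|z_j| \leq |z| < 1$ and $t \in [0,1]$, we have $|tz_j|<1$, so the geometric series $\frac{1}{1-tz_j}=\sum_{k\ge 0} t^k z_j^k$ converges uniformly in $t\in[0,1]$. Therefore
\begin{equation*}
\int_0^1 (tz)^{\alpha}\frac{1}{1-tz_j}\,dt=\sum_{k=0}^{\infty} z^{\alpha+ke_j}\int_0^1 t^{|\alpha|+k}\,dt=\sum_{k=0}^{\infty}\frac{1}{|\alpha|+k+1}z^{\alpha+ke_j},
\end{equation*}
which is exactly $C_jz^{\alpha}$ from \eqref{def}. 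By linearity the formula then holds for every polynomial.

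\textbf{Step 2: general analytic $f$.} Write $f(z)=\sum_{\alpha}a_{\alpha}z^{\alpha}$, absolutely convergent on $\mathbb{B}_d$. Fix $z\in\mathbb{B}_d$. Then
\begin{equation*}
f(tz)\frac{1}{1-tz_j}=\sum_{\alpha}\sum_{k\ge0}a_{\alpha}t^{|\alpha|+k}z^{\alpha+ke_j},
\end{equation*}
and the double series is dominated, uniformly in $t\in[0,1]$, by
\begin{equation*}
\Big(\sum_{\alpha}|a_{\alpha}||z^{\alpha}|\Big)\frac{1}{1-|z_j|}<\infty.
\end{equation*}
Hence Fubini/Tonelli (or the Weierstrass M-test) lets us integrate term by term, giving
\begin{equation*}
\int_0^1 f(tz)\frac{dt}{1-tz_j}=\sum_{\alpha}a_{\alpha}\sum_{k}\frac{z^{\alpha+ke_j}}{|\alpha|+k+1}.
\end{equation*}
The same absolute convergence justifies regrouping by $\beta=\alpha+ke_j$, which yields
\begin{equation*}
\sum_{\beta}\frac{1}{|\beta|+1}\Big(\sum_{k=0}^{\beta_j}a_{\beta-ke_j}\Big)z^{\beta},
\end{equation*}
matching the coefficient formula obtained in the proof of the boundedness theorem. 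The integral is therefore a convergent power series on $\mathbb{B}_d$, hence analytic (it is also visibly holomorphic in $z$ by differentiation under the integral), and it coincides with $C_jf$.

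\textbf{Step 3: consistency on $H^2_d$.} For $f\in H^2_d$, both sides agree on polynomials. Norm convergence in $H^2_d$ implies locally uniform convergence on $\mathbb{B}_d$, since point evaluations are bounded by $\|k_z\|$. Combining the boundedness of $C_j$ from the preceding theorem with dominated convergence in the integral, the two definitions agree pointwise.

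The main obstacle is the justification of the interchange of sum and integral, together with the rearrangement into $z^{\beta}$ terms. The key estimate is the uniform bound $|1-tz_j|^{-1}\le (1-|z_j|)^{-1}$ combined with absolute convergence of the power series at the point $z$, and this bound is what I would establish carefully first.
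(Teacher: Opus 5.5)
Your proposal is correct and follows the paper's approach: Step 1 is exactly the paper's argument, which expands $\frac{1}{1-tz_j}$ as a geometric series and integrates the monomial term by term. Your Steps 2 and 3 supply the justification the paper skips when it asserts it is ``sufficient to verify equivalence on the monomials.'' Specifically, you give the dominating bound $\bigl(\sum_{\alpha}|a_{\alpha}||z^{\alpha}|\bigr)(1-|z_j|)^{-1}$ that licenses interchanging sum and integral on $\mathcal{O}(\mathbb{B}_d)$, and a continuity argument identifying the integral with the bounded extension of $C_j$ on $H^2_d$.
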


\begin{proof}
Given $|tz_j|<1$, we have $\frac{1}{1-tz_j}=\sum_{k=0}^{\infty}t^kz_j^{k}$. It is sufficient to verify equivalence on the monomials. Thus, let $f(z)=z^{\alpha}$. We have $f(tz)=t^{|\alpha|}z^{\alpha}$. Thus  $f(tz)\frac{1}{1-tz_j}=t^{|\alpha|}z^{\alpha}\sum_{k=0}^{\infty}t^kz_j^{k}=\sum_{k=0}^{\infty}t^{|\alpha|+k}z^{\alpha+ke_j}$. We then have

\begin{align}
\int_0^1 f(tz) \frac{1}{1-tz_j} dt&=\int_0^1\left(\sum_{k=0}^{\infty}t^{|\alpha|+k}z^{\alpha+ke_j}\right) dt\\
\sum_{k=0}^{\infty}\left(\int_0^1 t^{|\alpha|+k} dt\right)z^{\alpha+ke_j}&= \sum_{k=0}^{\infty}\left(\frac{1}{|\alpha|+k+1}\right)z^{\alpha+ke_j}
\end{align}

as desired.

\end{proof}

\subsection{Decomposition into Shifts and Radial Derivatives}

Next, we decompose $C_j$ into previously studied operators. Recall for $f \in \mathcal{O}(\mathbb{B}_d)$ that the radial derivative is defined by

\begin{equation}
Rf=\sum_{j=1}^d z_j \frac{\partial f}{\partial z_j}
\end{equation}

from which it follows that

\begin{equation}
Rz^{\alpha}=|\alpha| z^{\alpha}.
\end{equation}

We can then formally define $(R+I)^{-1}z^{\alpha}=\frac{1}{|\alpha|+1}z^{\alpha}$. Likewise, recall the coordinate shift operator is given by $M_{z_j}z^{\alpha}=z^{\alpha+e_j}$. Iterating $k$ times gives $M_{z_j}^{k}z^{\alpha}=z^{\alpha+ke_j}$. Putting this together gives

\begin{equation}
C_j z^{\alpha}=\sum_{k=0}^{\infty} (R+I)^{-1} M_{z_j}^{k} z^{\alpha}=(R+I)^{-1} \sum_{k=0}^{\infty}  M_{z_j}^{k} z^{\alpha}.
\end{equation}

From this decomposition, one observes that the Ces\`aro tuple does not commute. For example, we have 

\begin{align}
C_iC_j 1&=C_i (R+I)^{-1} \sum_{k=0}^{\infty}M_{z_j}^k 1=C_i(R+I)^{-1}(1+z_j+z_j^2+\cdots)\\
&=C_i\left(1+\frac{1}{2}z_j+\frac{1}{3}z_j^2+\cdots \right)\\
&=(R+1)^{-1}\sum_{k=0}^{\infty}M_{z_i}^k\left(1+\frac{1}{2}z_j+\frac{1}{3}z_j^2+\cdots \right)\\
&=(R+1)^{-1}\sum_{k=0}^{\infty}\left(z_i^k+\frac{1}{2}z_i^kz_j+\frac{1}{3}z_i^kz_j^2+\cdots \right)\\
&=\sum_{k=0}^{\infty} \left(\frac{1}{k+1}z_i^k+\frac{1}{2}\frac{1}{k+2}z_i^kz_j+\frac{1}{3}\frac{1}{k+1}z_i^kz_j^2+\cdots \right).
\end{align}

Swapping $i$ and $j$ gives us the expression for $C_jC_i 1$. It is clear that for $i \neq j$, these expressions are not equal and thus they do not commute.

\subsection{The Inverse of $C_j$}

Let $g=C_jf$ with $f(z)=\sum_{\alpha}a_{\alpha}z^{\alpha}$ and $g(z)=\sum_{\beta}b_{\beta}z^{\beta}$. We observed in our proof of boundedness that we may write

\begin{equation}
b_{\beta}=\frac{1}{|\beta|+1}\sum_{k=0}^{\beta_j}a_{\beta-ke_j} \Rightarrow (|\beta|+1)b_{\beta}= \sum_{k=0}^{\beta_j}a_{\beta-ke_j}
\end{equation}

Thus, to find $C_j^{-1}$, we seek to solve $a_{\alpha}$ in terms of $b_{\beta}$. To isolate $a_{\alpha}$, note that

\begin{equation}
a_{\beta}=(|\beta|+1)b_{\beta}-|\beta|b_{\beta-e_j}.
\end{equation}

Thus we have

\begin{equation}
C_j^{-1}g(z)=\sum_{\beta} \left((|\beta|+1)b_{\beta}-|\beta|b_{\beta-e_j} \right)z^{\beta}.
\end{equation}

After some reindexing, a routine check verifies that we may write this in operator form as 

\begin{equation}
C_j^{-1}=(R+I)-RM_{z_j}=R(1-M_{z_j})+I.
\end{equation}

Additionally, using the commutation relation $RM_{z_j}=M_{z_j}R+M_{z_j}$, we may write this as 

\begin{equation}
C_j^{-1}=(I-M_{z_j})(R+I).
\end{equation}

We observe thus, due to that radial derivative, that $C_j^{-1}$ is unbounded.

\subsection{Spectral Properties of the Ces\`aro Tuple and Noncompactness}

There is an established generalization for the spectrum of a tuple of operators known as the Taylor spectrum \cite{Taylor}. However, it requires the operators in the tuple to commute. Since the operators in our Ces\`aro tuple do not commute, we cannot study the Taylor spectrum. However, one may still study the classical spectrum for a given $C_j$.

Let $M_{\alpha}= \overline{\text{span}}\{z^{\alpha+ke_j}, k \geq 0, \alpha_j=0\}$. Observe that $C_j$ maps each $M_{\alpha}$ into itself. Additionally, we note that the Drury-Arveson space may be written as a direct sum given by 

\begin{equation}
H^2_d(\mathbb{B}_d)=\bigoplus_{\alpha \mid \alpha_j=0}M_{\alpha}.
\end{equation}

We can observe this explicitly via 

\begin{equation}
C_jz^{\alpha}=C_j \left(z_j^{\alpha_j} \prod_{i \neq j} z_i^{\alpha_i} \right)=\left(\sum_{k=0}^{\infty}\frac{1}{\alpha_j+|\alpha'|+k+1} z^{\alpha_j+k}\right) \left( \prod_{i \neq j} z_i^{\alpha_i} \right)
\end{equation}

where $|\alpha'|=\sum_{i \neq j} \alpha_i$. Next, recall that the spectrum of a direct sum is the union of the the spectrum of each subspace. Hence we compute $\sigma (M_{\alpha})$ and may consider our eigenfunctions $f$ as functions of $f(z_j)$ by fixing $\alpha'$ (i.e. $\alpha$ minus $\alpha_j$). We will formally solve the eigenalue equation $C_jf=\lambda f$ and then determine which $\lambda$'s correspond to eigenfunctions in $H^2_d$.

On $M_{\alpha}$, where $\alpha'$ is fixed, our integral representation may be determined as follows. Let $u=z_jt$ so that $dt=\frac{du}{z_j}$ and $t^M=\left(\frac{u}{z_j}\right)^M$ where $M=|\alpha'|$. Then we have

\begin{align}
(C_jf)(z)&=\int_0^1\frac{f(tz)}{1-tz_j}dt=\int_0^1\frac{f(tz_j)}{1-tz_j}t^Mdt\\
&=\frac{1}{z_j^{M+1}}\int_0^{z_j}\frac{f(u)u^M}{1-u}du
\end{align}

We then consider $C_jf= \lambda f$. This gives 

\begin{equation}
\lambda f(z_j)=\frac{1}{z_j^{M+1}}\int_0^{z_j}\frac{f(u)u^M}{1-u}du.
\end{equation}

After multiplying both sides of our equation by $z_j^{M+1}$ and differentiating with respect to $z_j$, we obtain

\begin{equation}
\lambda(M+1)z_j^Mf(z_j)+\lambda z_j^{M+1}f'(z_j)=\frac{f(z_j)z_j^M}{1-z_j}.
\end{equation}

This simplifies to 

\begin{equation}
\frac{f'(z_j)}{f(z_j)}=\left(\frac{1}{\lambda}-M-1 \right)\frac{1}{z_j}+\frac{1}{\lambda}\frac{1}{1-z_j}.
\end{equation}

which, after integrating and solving for $f(z_j)$ gives

\begin{equation}
f(z_j)=z_j^{\frac{1}{\lambda}-M-1}(1-z_j)^{-\frac{1}{\lambda}}
\end{equation}

Next, let $w=\frac{1}{\lambda}$ so that we have $(1-z_j)^{-w}=\sum_{k=0}^{\infty}\frac{\Gamma(k+w)}{\Gamma(k+1)\Gamma(w)}z_j^k$. It is well known that this converges precisely when $\Re w < \frac{1}{2}$ and hence is when $f$ is in our space. Writing $\lambda=x+iy$, we have $\Re w=\frac{x}{x^2+y^2}$. Setting this equal to $\frac{1}{2}$ and doing a little algebra gives $(x-1)^2+y^2=1$ from which it follows that 

\begin{equation}
\sigma(C_j)=\{\lambda \in \mathbb{C} \mid |\lambda-1| \leq 1 \}.
\end{equation}

Furthermore, by basic properties of the spectrum of compact operators (e.g. the closed disk is an uncountable set), it follows that $C_j$ is not compact.

\subsection{The Adjoint}

We next compute the adjoint of $C_j$. Since the monomials form an orthogonal basis, it is sufficient for our purposes to compute the adjoint acting on $z^{\beta}$.

\begin{Theorem}
The adjoint of $C_j$ acting on the monomial $z^{\beta}$ is given by

\begin{equation}
C_j^*z^{\beta}=\frac{\beta !}{(|\beta|+1)!}\sum_{k=0}^{\beta_j}\left(\frac{(|\beta|-k)!}{(\beta-ke_j)!} \right) z^{\beta-ke_j}.
\end{equation}

\end{Theorem}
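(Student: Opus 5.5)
Since the monomials $\{z^{\alpha}\}$ form an orthogonal basis of $H^2_d$ and $C_j$ is bounded (by the boundedness theorem above), $C_j^*$ is the bounded operator determined by $\langle C_j^* z^{\beta}, z^{\alpha}\rangle = \langle z^{\beta}, C_j z^{\alpha}\rangle$ for all multi-indices $\alpha,\beta$. Expanding $C_j^* z^{\beta}$ in the orthogonal basis gives
\begin{equation*}
C_j^* z^{\beta}=\sum_{\alpha}\frac{\langle z^{\beta}, C_j z^{\alpha}\rangle}{\|z^{\alpha}\|^2}\, z^{\alpha},
\qquad \|z^{\alpha}\|^2=\frac{\alpha!}{|\alpha|!}.
\end{equation*}
So the task is to compute $\langle z^{\beta}, C_j z^{\alpha}\rangle$ for every $\alpha$ and then normalize.

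For this I use the defining formula \eqref{def}: $C_j z^{\alpha}=\sum_{k\ge 0}\frac{1}{|\alpha|+k+1}z^{\alpha+ke_j}$. This series converges in $H^2_d$ because its terms are orthogonal and its coefficients are square summable against the weights, which are at most $1$. Pairing it with $z^{\beta}$, orthogonality of monomials kills every term except the one with $\alpha+ke_j=\beta$. Hence the inner product vanishes unless $\alpha=\beta-ke_j$ for some $0\le k\le \beta_j$, that is, unless $\alpha$ agrees with $\beta$ off the $j$th coordinate and $\alpha_j\le\beta_j$. In that case $|\alpha|+k+1=|\beta|+1$, and the coefficients are real, so
\begin{equation*}
\langle z^{\beta}, C_j z^{\beta-ke_j}\rangle=\frac{1}{|\beta|+1}\,\frac{\beta!}{|\beta|!}=\frac{\beta!}{(|\beta|+1)!}.
\end{equation*}

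Dividing by $\|z^{\beta-ke_j}\|^2=\frac{(\beta-ke_j)!}{(|\beta|-k)!}$ and summing over $0\le k\le\beta_j$ gives exactly the stated finite sum:
\begin{equation*}
C_j^*z^{\beta}=\frac{\beta!}{(|\beta|+1)!}\sum_{k=0}^{\beta_j}\frac{(|\beta|-k)!}{(\beta-ke_j)!}\,z^{\beta-ke_j}.
\end{equation*}

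There is no real obstacle here. The only points needing care are the bookkeeping of which $\alpha$ contribute, namely the constraint $k\le\beta_j$ coming from $\alpha_j\ge 0$, and the justification for pairing term-by-term with the infinite series defining $C_jz^{\alpha}$. The latter follows from continuity of the inner product and $H^2_d$-convergence of that series. As a sanity check, the normalized coefficients $\frac{\beta!}{(|\beta|+1)!}\cdot\frac{(|\beta|-k)!}{(\beta-ke_j)!}=\frac{1}{|\beta|+1}\frac{w_\beta}{w_{\beta-ke_j}}$ are at most $\frac{1}{|\beta|+1}$ by the Lemma. This matches the one-variable formula $C^*z^n=\frac{1}{n+1}\sum_{k\le n}z^k$ when $d=1$.
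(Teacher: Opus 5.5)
Your proof is correct and takes essentially the same route as the paper: you compute $\langle z^{\beta}, C_j z^{\beta-ke_j}\rangle=\frac{\beta!}{(|\beta|+1)!}$ by orthogonality, then divide by $\|z^{\beta-ke_j}\|^2=\frac{(\beta-ke_j)!}{(|\beta|-k)!}$. Your version is slightly more careful, because you derive rather than assume that only the monomials $z^{\beta-ke_j}$ with $0\le k\le\beta_j$ appear, and you justify pairing term-by-term with the series for $C_jz^{\alpha}$; your coefficient is also the correct one, whereas the paper's displayed $b_k$ has a typo ($|\beta|!$ where $(\beta-ke_j)!$ is meant).
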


\begin{proof}
By definition we have 

\begin{align}
\langle C_j z^{\alpha}, z^{\beta} \rangle&=\langle z^{\alpha}, C_j^*z^{\beta} \rangle=\sum_{k=0}^{\infty} \frac{1}{|\alpha|+k+1}\langle z^{\alpha+ke_j}, z^{\beta} \rangle\\
&=\sum_{k=0}^{\infty}\frac{1}{|\alpha|+k+1}\frac{(\alpha+ke_j)!}{(|\alpha|+k)!}.
\end{align}

Hence $\beta=\alpha+ke_j$ which implies $\alpha=\beta-ke_j$ from which it follows that $\beta_j \geq k$. We thus write 

\begin{equation}
\langle C_j z^{\beta-ke_j}, z^{\beta} \rangle=\langle \frac{1}{|\beta-ke_j|+k+1}z^{\beta}, z^{\beta} \rangle=\frac{1}{|\beta|+1}||z^{\beta}||^2
\end{equation}

Next, since $\beta_j \geq k$, we write our adjoint as a linear combination of our monomials $\beta-ke_j$ ranging from $0$ to $\beta_j$. This gives

\begin{equation}
C_j^*z^{\beta}=\sum_{k=0}^{\beta_j} b_k z^{\beta-ke_j} \Rightarrow \langle z^{\beta-ke_j}, C_j^*z^{\beta} \rangle=b_k ||z^{\beta-ke_j}||^2.
\end{equation}

Equating expressions gives us $b_k=\frac{\beta !}{(|\beta|+1)!}\frac{(|\beta|-k)!}{|\beta|!}$ which, when substituting into our summation, gives our result.

\end{proof}

\section{The Roadmap to Future Discovery}

While we have resolved several immediate properties of our Ces\`aro tuple, much remains to be done. As a prompt for the interested reader, below are several avenues of investigation to pursue in regards to our tuple.

\begin{itemize}
\item We observed that $H^2_d(\mathbb{B_d})$ is one space on a natural scale of Hilbert function spaces. How do these (and future) results generalize to values of $\alpha \neq 1$?

\item Many questions involving  $H^2_d(\mathbb{B_d})$ are more easily addressed when considered in a noncommutative setting \cite{Jury}. What happens if we recast the Ces\`aro tuple problems in this setting? 

\item In one variable, the Ces\`aro operator has many deep relationships with other popular matrices such as the Hilbert and Hausdorff matrices \cite{Bennett1}, \cite{Bennett2}, can we construct analogous relations in several variables?

\item It is known that the Ces\`aro operator in one variable is hyponormal on $H^2$, i.e. $A^*A-AA^* \geq 0$ \cite{Halmos} and subnormal on $H^2$ \cite{Kriete}. Do these results generalize to the operators in our Ces\`aro tuple?

\end{itemize}

These are just a few of the many directions one may take the results of this article. It is up to the interested reader which road to take next...

\section*{Acknowledgements}
 The author would like to thank Alan Sola for words of encouragement, Michael Hartz for helpful comments on the Drury-Arveson space, and Bill Ross for writing an excellent exposition on the Ces\`aro operator that motivated this paper.

\end{document}